\newtheorem{theorem}{Theorem}[section]
\newtheorem{remark}{Remark}[section]
\newtheorem{corollary}{Corollary}[section]
\numberwithin{equation}{section}
\begin{document}
	
\title{More About operator order preserving}
\author{Gholamreza Karamali, Hamid Reza Moradi and Mohammad Sababheh}
\subjclass[2010]{Primary 47A63, Secondary 26A51, 26D15, 26B25, 39B62.}
\keywords{Operator order, Jensen's inequality, convex functions, self adjoint operators, positive operators.}

\begin{abstract}
It is well known that increasing functions do not preserve operator order in general; nor do decreasing functions reverse operator order. However, operator monotone increasing or operator monotone decreasing do. In this article, we employ a convex approach to discuss operator order preserving or conversing. As an easy consequence of more general results, we find non-negative constants $\gamma$ and $\psi$ such that $A\leq B$  implies
$$f(B)\leq f(A)+\gamma {\bf{1}}_{\mathcal{H}}\;~{\text{and}}~\;f(A)\leq f(B)+\psi {\bf{1}}_{\mathcal{H}},$$ for the self adjoint operators $A,B$ on a Hilbert space $\mathcal{H}$ with identity operator ${\bf{1}}_{\mathcal{H}}$ and for the convex function $f$  whose domain contains the spectra of both $A$ and $B$.

The connection of these results to the existing literature will be discussed and the significance will be emphasized by some examples.
 
\end{abstract}
\maketitle
\pagestyle{myheadings}
\markboth{\centerline {More About Operator Order Preserving}}
{\centerline {G. Karamali, H. R. Moradi \& M. Sababheh}}
\bigskip
\bigskip
\section{Introduction}
Let $\mathcal{B}\left( \mathcal{H} \right)$ be the $C^*$--algebra of all bounded linear operators on a Hilbert space $\mathcal{H}$ and let ${{\mathbf{1}}_{\mathcal{H}}}$ be the identity operator in  $\mathcal{B}\left( \mathcal{H} \right)$.\\
An operator $A\in\mathcal{B}(\mathcal{H})$ is said to be positive (written $A\ge0$) if $\left\langle Ax,x \right\rangle \ge 0$ holds for all $x\in \mathcal{H}$.  If $A$ is positive and invertible, $A$ is said to be strictly positive and is written as $A>0$. This positivity defines a partial order on the class of self adjoint operators, where we write $B\ge A$ in case $B-A\ge0$.\\
 The Gelfand map $f\left( t \right)\mapsto f\left( A \right)$ is an isometrical $*$--isomorphism between the ${{C}^{*}}$--algebra $C\left( \sigma \left( A \right) \right)$ of continuous functions on the spectrum $\sigma \left( A \right)$ of a self adjoint operator $A$ and the ${{C}^{*}}$--algebra generated by $A$ and the identity operator ${{\mathbf{1}}_{\mathcal{H}}}$. This is called the functional calculus of $A$.  If $f,g\in C\left( \sigma \left( A \right) \right)$, then $f\left( t \right)\ge g\left( t \right)$ ($t\in \sigma \left( A \right)$) implies  $f\left( A \right)\ge g\left( A \right)$.

For $A,B\in \mathcal{B}\left( \mathcal{H} \right)$, $A\oplus B$ is the operator defined on $\mathcal{B}\left( \mathcal{H}\oplus \mathcal{H} \right)$ by $\left( \begin{matrix}
A & 0  \\
0 & B  \\
\end{matrix} \right)$. 
 A linear map $\Phi:\mathcal{B}\left( \mathcal{H} \right)\to \mathcal{B}\left( \mathcal{K} \right)$ is positive if $\Phi \left( A \right)\ge 0$ whenever $A\ge 0$. It's said to be unital if $\Phi \left( {{\mathbf{1}}_{\mathcal{H}}} \right)={{\mathbf{1}}_{\mathcal{K}}}$. 

Among the most interesting functions when studying the $C^*$--algebra $\mathcal{B}(\mathcal{H})$ are the so called operator convex and operator monotone.  A continuous function $f$ defined on the interval $J$
is called an operator convex function if $f\left( \left( 1-v \right)A+vB \right)\le \left( 1-v \right)f\left( A \right)+vf\left( B \right)$ for every $0<v<1$ and for every pair of bounded self-adjoint operators $A$ and $B$ whose spectra are both in $J$. On the other hand, $f$ is said to be operator monotone if $A\leq B$ implies $f(A)\leq f(B)$ for every pair of such self adjoint operators with spectra in $J.$

It is well known that a convex function is not necessarily operator convex and a monotone function is not necessarily operator monotone. The function $f:\mathbb{R}\to\mathbb{R}$ defined by $f(t)=t^3$ serves as an example for both purposes; \cite[Example V.1.4., p. 114]{bhatia}.

The celebrated Choi inequality \cite{choi,davis} states that an operator convex function $f:J\to\mathbb{R}$ and a unital positive linear mapping $\Phi$ satisfy
\begin{equation}\label{choi_ineq_intro}
f(\Phi(A))\leq \Phi(f(A)),
\end{equation}
for any self adjoint operator $A$ with spectrum in $J.$\\
Hansen et al. \cite{1} extended Choi's inequality \eqref{choi_ineq_intro} and  showed that if $f:J\to \mathbb{R}$ is an operator convex function, ${{A}_{1}},\ldots ,{{A}_{n}}\in \mathcal{B}\left( \mathcal{H} \right)$ are self-adjoint operators with  spectra in $J$, and  ${{\Phi }_{1}},\ldots ,{{\Phi }_{n}}:\mathcal{B}\left( \mathcal{H} \right)\to \mathcal{B}\left( \mathcal{K} \right)$ are positive linear mappings such that $\sum\nolimits_{i=1}^{n}{{{\Phi }_{i}}\left( {{\mathbf{1}}_{\mathcal{H}}} \right)}={{\mathbf{1}}_{\mathcal{K}}}$, then 
\begin{equation}\label{7}
f\left( \sum\limits_{i=1}^{n}{{{\Phi }_{i}}\left( {{A}_{i}} \right)} \right)\le \sum\limits_{i=1}^{n}{{{\Phi }_{i}}\left( f\left( {{A}_{i}} \right) \right)}.
\end{equation}
It is evident that a convex function (not operator convex) does not necessarily satisfy \eqref{choi_ineq_intro} nor \eqref{7}. However, if $f$ is convex, the following weaker inequality holds \cite[Lemma 2.1]{5}
\begin{equation}\label{19}
f\left( \left\langle \sum\limits_{i=1}^{n}{{{\Phi }_{i}}\left( {{A}_{i}} \right)}x,x \right\rangle  \right)\le \left\langle \sum\limits_{i=1}^{n}{{{\Phi }_{i}}\left( f\left( {{A}_{i}} \right) \right)}x,x \right\rangle
\end{equation}
for any unit vector $x\in \mathcal{K}$. Such inequalities are called  Jensen operator inequalities. We refer the reader to \cite{10, 6, re2, 11, re1, re3} for some references treating these inequalities.

The first goal of this article is to present convex versions of \eqref{choi_ineq_intro} and \eqref{7}. Of course, this will imply weaker inequalities than those for operator convex functions.

Strongly related to this, we discuss operator order preserving or conversing under monotone functions that are not necessarily operator monotone. 

Recall that the function $f:[0,\infty)\to [0,\infty)$ defined by $f(t)=t^p$ is operator monotone if and only if $0\leq p\leq 1;$ \cite[Corollary 1.16]{book1}. This implies the celebrated  \lq\lq L\"owner-Heinz inequality'' which asserts that if $0\le A\le B$, then ${{A}^{p}}\le {{B}^{p}}$ for any $p\in \left[ 0,1 \right]$. Now since if $p>1$, the function $f(t)=t^p$ is not operator monotone, the L\"owner-Heinz inequality does not  hold for $p>1$. Related to this problem, Furuta \cite{n3} proved the following ``order preserving result". In this result and in what follows, $\sigma(A)$ denotes the spectrum of $A$.
\begin{theorem}\label{2}
	Let $A,B\in \mathbb{B}\left( \mathcal{H} \right)$ be two positive operators such that $\sigma \left( A \right)\subseteq \left[ m,M \right]$  for some scalars $0<m<M$. If $B\le A$, then 
	\[{{B}^{p}}\le {{K}}\left( m,M,p \right){{A}^{p}}\quad\text{ for }p\ge 1,\] 
	where $K\left( m,M,p \right)$ is the generalized Kantorovich constant defined by
	\begin{equation}\label{9}
	K\left( m,M,p \right)=\frac{(m{{M}^{p}}-M{{m}^{p}})}{\left( p-1 \right)\left( M-m \right)}{{\left( \frac{p-1}{p}\frac{{{M}^{p}}-{{m}^{p}}}{m{{M}^{p}}-M{{m}^{p}}} \right)}^{p}}\quad\text{ for }p\in \mathbb{R}.
	\end{equation}
\end{theorem}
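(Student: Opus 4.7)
I would use the Mond--Pecari\'c method: establish a scalar Kantorovich sandwich for $t\mapsto t^{p}$ on $[m,M]$ and lift it to operators via functional calculus, using the hypothesis $B\le A$.

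On the scalar side, since $p\ge 1$ the map $t\mapsto t^{p}$ is convex on $[m,M]$, so the chord joining $(m,m^{p})$ to $(M,M^{p})$ lies above the graph:
\begin{equation*}
t^{p}\le \alpha t+\beta,\qquad t\in[m,M],
\end{equation*}
with $\alpha=(M^{p}-m^{p})/(M-m)>0$ and $\beta=(Mm^{p}-mM^{p})/(M-m)<0$. A routine calculus optimization of $(\alpha t+\beta)/t^{p}$ on $[m,M]$ identifies the unique interior critical point $t_{0}=\tfrac{p}{p-1}\cdot\tfrac{-\beta}{\alpha}$ and shows the maximum value is exactly $K(m,M,p)$ of \eqref{9}; hence $\alpha t+\beta\le K(m,M,p)\,t^{p}$ on $[m,M]$. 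Applying functional calculus to $A$, whose spectrum lies in $[m,M]$, converts this into $\alpha A+\beta\mathbf{1}_{\mathcal{H}}\le K(m,M,p)A^{p}$. Combining with $\alpha B+\beta\mathbf{1}_{\mathcal{H}}\le \alpha A+\beta\mathbf{1}_{\mathcal{H}}$ (from $B\le A$ and $\alpha>0$), and with the chord estimate applied to $B$, I would chain
\begin{equation*}
B^{p}\le \alpha B+\beta\mathbf{1}_{\mathcal{H}}\le \alpha A+\beta\mathbf{1}_{\mathcal{H}}\le K(m,M,p)A^{p},
\end{equation*}
which is the desired conclusion.

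The hard step is the first link $B^{p}\le \alpha B+\beta\mathbf{1}_{\mathcal{H}}$, as it requires the chord estimate on $\sigma(B)$. From $0\le B\le A$ and $\sigma(A)\subseteq[m,M]$ only $\sigma(B)\subseteq[0,M]$ is automatic, and since $\beta<0$ the chord bound is violated on $[0,m)$; indeed, a simple diagonal example exhibits $B^{p}\not\le \alpha B+\beta\mathbf{1}_{\mathcal{H}}$ whenever an eigenvalue of $B$ falls below $m$. To circumvent this I would split $B$ via its own spectral projection $P=E_{B}([0,m))$: on the range of $\mathbf{1}_{\mathcal{H}}-P$ the chord estimate applies and the chain above goes through, while on the range of $P$ the crude bound $t^{p}\le m^{p-1}t$ combined with $B\le A$ and $A^{p}\ge m^{p-1}A$ (the latter from $\sigma(A)\subseteq[m,M]$) bounds that piece by $A^{p}$, hence by $K(m,M,p)A^{p}$ since $K(m,M,p)\ge 1$. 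Reassembling the two pieces into a single operator inequality against $K(m,M,p)A^{p}$ without sacrificing the sharp constant is the main technical hurdle I anticipate.
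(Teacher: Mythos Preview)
The paper does not prove this theorem; it is quoted in the Introduction as Furuta's result \cite{n3} and serves only as motivation.  The paper does, however, record Theorem~\ref{th1.2} (from \cite{n1}) as a generalization and remarks that the choice $f(t)=t^{p}$ there recovers Theorem~\ref{2}.  The argument behind Theorem~\ref{th1.2} is precisely the Mond--Pe\v cari\'c chord method you outline: dominate the convex function by its secant $\alpha t+\beta$ on $[m,M]$, pass through the order hypothesis $B\le A$ via the linear term, and then bound the secant back by $K(m,M,p)\,t^{p}$ after the scalar optimization.  So your main line is exactly the intended route, and the calculus computation you sketch for the maximum of $(\alpha t+\beta)/t^{p}$ is the standard one.

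You have also spotted a genuine discrepancy.  Theorem~\ref{th1.2} explicitly assumes $\sigma(A),\sigma(B)\subseteq[m,M]$, whereas Theorem~\ref{2} as quoted constrains only $\sigma(A)$; your first link $B^{p}\le \alpha B+\beta\mathbf{1}_{\mathcal H}$ indeed needs $\sigma(B)\subseteq[m,M]$, and your diagonal counterexample to that single link is correct.  Your proposed repair via the spectral projection $P=E_{B}([0,m))$ runs into a real obstruction at the reassembly step: $P$ commutes with $B$ but not with $A$, so from $PB^{p}P\le K\,PA^{p}P$ and $(I-P)B^{p}(I-P)\le K\,(I-P)A^{p}(I-P)$ you obtain only $B^{p}\le K\bigl(PA^{p}P+(I-P)A^{p}(I-P)\bigr)$, and this pinching of $A^{p}$ is in general \emph{not} dominated by $A^{p}$ in the operator order (take $A^{p}$ to be the all-ones $2\times 2$ matrix and $P$ a rank-one coordinate projection).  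Hence the sharp constant does not survive that decomposition, exactly as you anticipated.  The cleanest resolution is to read Theorem~\ref{2} with the extra hypothesis $\sigma(B)\subseteq[m,M]$, consistently with Theorem~\ref{th1.2} which the paper itself says specializes to it; under that hypothesis your three-step chain is a complete and correct proof.
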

The role of Theorem \ref{2} is to present a L\"owner-Heinz inequality for $p>1$. However, this holds at the cost of an additional constant ${{K}}\left( m,M,p \right).$

It is clear that Theorem \ref{2} is an attempt to extend the relation
\begin{equation}\label{oper_mon_ineq_intro}
A\leq B\Rightarrow f(A)\leq f(B)
\end{equation}
 valid for the operator monotone function $f$ to the context of a monotone function; namely $f(t)=t^p, p>1.$ Elegantly, the relation \eqref{oper_mon_ineq_intro} was extended in \cite[Theorem 2.1]{n1} to a more general form that also extends Theorem \ref{2}; as follows.
\begin{theorem}\label{th1.2}
	Let $A,B\in \mathbb{B}\left( \mathcal{H} \right)$ be two positive operators satisfying $\sigma \left( A \right),\sigma \left( B \right)\subseteq \left[ m,M \right]$ for some scalars $0<m<M$ and let $f:\left[ m,M \right]\to \mathbb{R}$ be an increasing convex function. If $B\le A$, then for a given $\alpha >0$,
	\begin{equation}\label{5}
f\left( B \right)\le \alpha f\left( A \right)+\beta {{\mathbf{1}}_{\mathcal{H}}},	
	\end{equation}
	holds for
	\begin{equation}\label{12}
	\beta =\underset{m\le t\le M}{\mathop{\max }}\,\left\{ {{a}_{f}}t+{{b}_{f}}-\alpha f\left( t \right) \right\},
	\end{equation}
	where
	\[{{a}_{f}}\equiv \frac{f\left( M \right)-f\left( m \right)}{M-m}\quad\text{ and }\quad{{b}_{f}}\equiv \frac{Mf\left( m \right)-mf\left( M \right)}{M-m}.\]
\end{theorem}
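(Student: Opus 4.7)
The plan is to reduce the operator inequality to two scalar inequalities on $[m,M]$ and then transfer everything through the functional calculus, using the hypothesis $B\le A$ only once via ordinary scalar multiplication by a nonnegative constant. The two scalar facts I need are (i) the chord bound coming from convexity of $f$, namely $f(t)\le a_f t+b_f$ for all $t\in[m,M]$ (this is exactly the secant line joining $(m,f(m))$ to $(M,f(M))$), and (ii) the defining property of $\beta$, which after rearrangement reads $a_f t+b_f\le \alpha f(t)+\beta$ for all $t\in[m,M]$.

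First I would apply the Gelfand functional calculus to the inequality $f(t)\le a_f t+b_f$ with the operator $B$, whose spectrum lies in $[m,M]$; this yields
\[
f(B)\le a_f B+b_f\,\mathbf{1}_{\mathcal H}.
\]
Next, since $f$ is increasing on $[m,M]$ we have $a_f\ge 0$, so the hypothesis $B\le A$ implies $a_f B\le a_f A$, and therefore
\[
f(B)\le a_f A+b_f\,\mathbf{1}_{\mathcal H}.
\]
Finally, applying the functional calculus to the scalar inequality $a_f t+b_f\le \alpha f(t)+\beta$ with the operator $A$ (whose spectrum also lies in $[m,M]$) gives $a_f A+b_f\,\mathbf{1}_{\mathcal H}\le \alpha f(A)+\beta\,\mathbf{1}_{\mathcal H}$, and chaining the three displayed inequalities produces \eqref{5}.

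There is no real obstacle here; the argument is a clean three-line chain. The only point that deserves attention is the sign condition $a_f\ge 0$, which is where the hypothesis that $f$ is \emph{increasing} (not merely convex) is actually used, because without it the step $B\le A\Rightarrow a_f B\le a_f A$ would fail. It is worth remarking that the constant $\beta$ in \eqref{12} is optimal for this particular chord-based argument, since it is the smallest scalar for which the pointwise inequality $a_f t+b_f\le \alpha f(t)+\beta$ holds throughout $[m,M]$, and the maximum in \eqref{12} is attained (by compactness of $[m,M]$ and continuity of $f$), so $\beta$ is well defined.
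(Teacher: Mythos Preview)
Your argument is correct. It is the standard Mond--Pe\v{c}ari\'c chord-line proof: bound $f(B)$ above by the secant line via convexity and functional calculus, pass from $B$ to $A$ using $a_f\ge 0$ (which is exactly where ``increasing'' enters), and then absorb the secant into $\alpha f(A)+\beta\,\mathbf{1}_{\mathcal H}$ by the definition of $\beta$. Each step is justified as you say.

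Regarding comparison: the present paper does \emph{not} give its own proof of this statement. Theorem~\ref{th1.2} is quoted in the introduction as a known result from \cite[Theorem~2.1]{n1} (Mi\'ci\'c--Pe\v{c}ari\'c--Seo), and the paper's new contributions begin only with Theorem~\ref{05}. So there is nothing here to compare your proposal against; your write-up is essentially the original Mond--Pe\v{c}ari\'c argument from \cite{n1}, and it is fine.
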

Of course, the case $f\left( t \right)={{t}^{p}}\left( p\ge 1 \right)$ in Theorem \ref{th1.2} reduces to Theorem \ref{2} (see \cite[Remark 3.3]{n1}).\\ 
Further, the following converse of Theorem \ref{th1.2} has been proven in \cite[Theorem 2.1]{n2}.
\begin{theorem}\label{c}
	Let $A,B\in \mathbb{B}\left( \mathcal{H} \right)$ be two positive operators satisfying $\sigma \left( A \right),\sigma \left( B \right)\subseteq \left[ m,M \right]$ for some scalars $0<m<M$ and let $f:\left[ m,M \right]\to \mathbb{R}$ be a decreasing convex function. If $B\le A$, then for a given $\alpha >0$,
	\begin{equation}\label{11}
	f\left( A \right)\le \alpha f\left( B \right)+\beta {{\mathbf{1}}_{\mathcal{H}}},
	\end{equation}
	holds with $\beta$ as in \eqref{12}.
\end{theorem}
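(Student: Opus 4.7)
The plan is to combine two scalar inequalities that hold on $[m,M]$ with the functional calculus, using the monotonicity of $f$ at the crucial step. First, I would invoke the chord bound coming from convexity: since $f$ is convex on $[m,M]$, the secant line through $(m,f(m))$ and $(M,f(M))$ dominates $f$, that is, $f(t)\le a_f t + b_f$ for every $t\in [m,M]$. Because $\sigma(A)\subseteq [m,M]$, the functional calculus yields
\[
f(A)\le a_f A + b_f {\mathbf{1}}_{\mathcal{H}}.
\]

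The key (and only nontrivial) step is to pass from $A$ to $B$ on the right-hand side. Since $f$ is \emph{decreasing}, one has $f(M)\le f(m)$, so the slope $a_f=\frac{f(M)-f(m)}{M-m}$ is non-positive. Therefore the hypothesis $B\le A$ reverses to $a_f A\le a_f B$, and we obtain
\[
f(A)\le a_f A + b_f {\mathbf{1}}_{\mathcal{H}} \le a_f B + b_f {\mathbf{1}}_{\mathcal{H}}.
\]
Without the sign information on $a_f$, this operator-inequality transfer would fail, so this is the pivotal observation and the main (small) obstacle; it also explains why Theorem~\ref{th1.2} needs the \emph{increasing} hypothesis while its converse Theorem~\ref{c} needs the \emph{decreasing} one.

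To finish, I would feed the definition of $\beta$ in \eqref{12} back into the functional calculus. By the very definition of $\beta$, the scalar inequality
\[
a_f t + b_f \le \alpha f(t) + \beta, \qquad t\in [m,M],
\]
holds. Applying the functional calculus to $B$, whose spectrum lies in $[m,M]$, gives $a_f B + b_f {\mathbf{1}}_{\mathcal{H}} \le \alpha f(B) + \beta {\mathbf{1}}_{\mathcal{H}}$. Chaining this with the previous display yields
\[
f(A)\le \alpha f(B) + \beta {\mathbf{1}}_{\mathcal{H}},
\]
which is \eqref{11}. No properties of $f$ beyond convexity, monotonicity, and continuity on $[m,M]$ are used, and the strict positivity of $A,B$ plays no role beyond ensuring their spectra lie in $[m,M]\subset (0,\infty)$, as assumed.
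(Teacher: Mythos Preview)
Your argument is correct and is precisely the standard Mond--Pe\v{c}ari\'c method proof of this result. Note, however, that the paper does not supply its own proof of Theorem~\ref{c}; it is quoted in the introduction as a known result from \cite{n2}, so there is no in-paper proof to compare against. Your write-up matches the classical derivation: secant bound from convexity, sign of $a_f$ from the decreasing hypothesis to flip $B\le A$ into $a_fA\le a_fB$, and then the definition of $\beta$ via functional calculus on $B$.
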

We here cite \cite{n5} and \cite{n4} as pertinent references to inequalities of types \eqref{5} and \eqref{11}.\\
Pondering both Theorems \ref{th1.2} and \ref{c} and their proofs we find the roles of both convexity and monotony of $f$. Further, we find the condition that $\sigma(A),\sigma(B)\subset [m,M]$ for some finite scalars $m,M$ are necessary.

In this article, we present convex and monotone versions of \eqref{choi_ineq_intro} and \eqref{7} without appealing to operator convexity or monotony, and naturally this will be at the cost of additional constants. Further and as important, we present new versions of \eqref{5} and \eqref{11} for self adjoint operators $A$ and $B$, without having the restriction $\sigma(A),\sigma(B)\subset [m,M].$

For example, in Theorem \ref{05}, we find two quantities $I_1$ and $I_2$ such that
$$I_1\leq \left\langle \sum\limits_{i=1}^{n}{{{\Phi }_{i}}\left( f\left( {{A}_{i}} \right) \right)}x,x \right\rangle -\left\langle f\left( \sum\limits_{i=1}^{n}{{{\Phi }_{i}}\left( {{B}_{i}} \right)} \right)x,x \right\rangle\leq I_2;$$ as an extension and reverse of \eqref{19}. Then several consequences  of the aforementioned inequalities are deduced.

The gradient inequality for convex functions stating that
\begin{equation}\label{grad_ineq_intro}
f(s)+f'(s)(t-s)\leq f(t),~ s,t\in J,
\end{equation}
for the differentiable convex $f:J\to\mathbb{R}$ will be a main tool in our proofs. We emphasize that this approach is a new approach in obtaining such inequalities.

\section{Main Results}
We present our main results in two parts; where in the first part we present the operator order results without prior assumptions on the order between $A$ and $B$. Then, in the second part we clarify how these results work when assuming an order like $A\leq B$.

However, all results presented in this direction will follow from a very general result, that we prove first as a generalized form of \eqref{19}. For the rest of this paper, the notation $C^{1}_{\text{co}}(J)$ will stand for the class of differentiable convex functions defined on the interval $J$.
\begin{theorem}\label{05}
	Let ${{\Phi }_{1}},\ldots ,{{\Phi }_{n}}:\mathcal{B}\left( \mathcal{H} \right)\to \mathcal{B}\left( \mathcal{K} \right)$ be positive linear mappings with $\sum\nolimits_{i=1}^{n}{{{\Phi }_{i}}\left( {{\mathbf{1}}_{\mathcal{H}}} \right)}={{\mathbf{1}}_{\mathcal{K}}}$ and let ${{A}_{1}},\ldots ,{{A}_{n}},~{{B}_{1}},\ldots ,{{B}_{n}}\in \mathcal{B}\left( \mathcal{H} \right)$ be self adjoint operators with  spectra contained in the interval $J$. If $f\in {{C}^{1}_{\text{co}}}\left( J \right)$, then for any unit vector $x\in \mathcal{K}$,
	\begin{align}
	& \left\langle \sum\limits_{i=1}^{n}{{{\Phi }_{i}}\left( {{A}_{i}} \right)}x,x \right\rangle \left\langle f'\left( \sum\limits_{i=1}^{n}{{{\Phi }_{i}}\left( {{B}_{i}} \right)} \right)x,x \right\rangle -\left\langle f'\left( \sum\limits_{i=1}^{n}{{{\Phi }_{i}}\left( {{B}_{i}} \right)} \right)\sum\limits_{i=1}^{n}{{{\Phi }_{i}}\left( {{B}_{i}} \right)}x,x \right\rangle  \nonumber\\ 
	& \le \left\langle \sum\limits_{i=1}^{n}{{{\Phi }_{i}}\left( f\left( {{A}_{i}} \right) \right)}x,x \right\rangle -\left\langle f\left( \sum\limits_{i=1}^{n}{{{\Phi }_{i}}\left( {{B}_{i}} \right)} \right)x,x \right\rangle  \label{02}\\ 
	& \le \left\langle \sum\limits_{i=1}^{n}{{{\Phi }_{i}}\left( f'\left( {{A}_{i}} \right){{A}_{i}} \right)}x,x \right\rangle -\left\langle \sum\limits_{i=1}^{n}{{{\Phi }_{i}}\left( f'\left( {{A}_{i}} \right) \right)}x,x \right\rangle \left\langle \sum\limits_{i=1}^{n}{{{\Phi }_{i}}\left( {{B}_{i}} \right)}x,x \right\rangle   \label{3}.
	\end{align}
\end{theorem}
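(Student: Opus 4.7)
The plan is to apply the scalar gradient inequality for convex functions in two rounds, once in each of the two variables, via the functional calculus. The two equivalent forms are
\begin{equation*}
f(a)-f(b)-f'(b)(a-b)\ge 0\quad\text{and}\quad f(b)-f(a)-f'(a)(b-a)\ge 0,\qquad a,b\in J,
\end{equation*}
and the basic principle used throughout is that a continuous non-negative function on $J$ produces a positive operator when evaluated on any self adjoint operator whose spectrum lies in $J$, together with the fact that $\sum_{i=1}^{n}\Phi_i$ is positive and unital.

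For the lower bound \eqref{02}, I would freeze a scalar $b\in J$ and use the first form. Functional calculus on each $A_i$ promotes it to the operator inequality $f(A_i)\ge f(b){\mathbf{1}}_{\mathcal{H}}+f'(b)(A_i-b{\mathbf{1}}_{\mathcal{H}})$. Applying $\Phi_i$, summing over $i$, and compressing against the unit vector $x\in\mathcal{K}$ yields the scalar inequality
\begin{equation*}
\left\langle \sum_{i=1}^{n}\Phi_i(f(A_i))x,x\right\rangle\ge f(b)+f'(b)\left(\left\langle \sum_{i=1}^{n}\Phi_i(A_i)x,x\right\rangle-b\right),\qquad b\in J.
\end{equation*}
Writing $S=\sum_{i=1}^{n}\Phi_i(B_i)$ and viewing the preceding display as the non-negativity of a continuous function of $b$ on $J$, a second application of functional calculus, this time to $S$, followed by compression against $x$, produces precisely \eqref{02}, after using that $S$ and $f'(S)$ commute.

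For the upper bound \eqref{3} I would run the symmetric argument. Freezing a scalar $b\in J$ and using the second form, functional calculus on each $A_i$ gives $f(b){\mathbf{1}}_{\mathcal{H}}\ge f(A_i)-f'(A_i)A_i+bf'(A_i)$. Applying $\Phi_i$, summing, and compressing against $x$ produces a scalar inequality in $b$ of the form $f(b)\ge \alpha-\beta+b\gamma$, whose coefficients are exactly the inner products appearing in \eqref{3}. A second round of functional calculus with $b\mapsto S$ and compression against $x$ then delivers \eqref{3}.

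The main obstacle here is clerical rather than conceptual. The crucial observation that justifies the two-stage approach is that all the coefficients produced in the first round --- the scalars $\langle \sum_{i}\Phi_i(A_i)x,x\rangle$, $\langle \sum_{i}\Phi_i(f(A_i))x,x\rangle$, $\langle \sum_{i}\Phi_i(f'(A_i))x,x\rangle$ and $\langle \sum_{i}\Phi_i(f'(A_i)A_i)x,x\rangle$ --- are genuine numbers, so the resulting inequality is a pointwise scalar inequality valid on all of $J$ and is therefore eligible to be fed into the functional calculus of $S$ in the second round. Once this is seen, the two halves run in parallel and the remaining work is algebraic rearrangement.
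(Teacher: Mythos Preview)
Your proposal is correct and follows essentially the same route as the paper: start from the scalar gradient double inequality $f'(s)(t-s)\le f(t)-f(s)\le f'(t)(t-s)$, apply functional calculus in $t$ to each $A_i$, push through the $\Phi_i$ and sum, compress against $x$ to obtain a scalar inequality valid for all $s\in J$, and then apply a second round of functional calculus in $s$ to $S=\sum_i\Phi_i(B_i)$ before compressing once more. The only cosmetic difference is that the paper carries both halves simultaneously via the double inequality, whereas you split the lower and upper bounds into two parallel passes.
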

\begin{proof}
	Since $f$ is convex and differentiable on $J$,  it follows from \eqref{grad_ineq_intro} that 
	\[f'\left( s \right)\left( t-s \right)\le f\left( t \right)-f\left( s \right)\le f'\left( t \right)\left( t-s \right)\] 
	for any $t,s\in J$. This is equivalent to
	\begin{equation}\label{1}
	f'\left( s \right)t-f'\left( s \right)s\le f\left( t \right)-f\left( s \right)\le f'\left( t \right)t-f'\left( t \right)s.
	\end{equation}
	Applying functional calculus for the operator ${{A}_{i}}\left( i=1,\ldots ,n \right)$, we have
	\[f'\left( s \right){{A}_{i}}-f'\left( s \right)s{{\mathbf{1}}_{\mathcal{H}}}\le f\left( {{A}_{i}} \right)-f\left( s \right){{\mathbf{1}}_{\mathcal{H}}}\le f'\left( {{A}_{i}} \right){{A}_{i}}-sf'\left( {{A}_{i}} \right).\] 
	Applying the positive linear mappings ${{\Phi }_{i}}$ and adding, we obtain
	\[\begin{aligned}
	f'\left( s \right)\sum\limits_{i=1}^{n}{{{\Phi }_{i}}\left( {{A}_{i}} \right)}-f'\left( s \right)s{{\mathbf{1}}_{\mathcal{K}}}&\le \sum\limits_{i=1}^{n}{{{\Phi }_{i}}\left( f\left( {{A}_{i}} \right) \right)}-f\left( s \right){{\mathbf{1}}_{\mathcal{K}}} \\ 
	& \le \sum\limits_{i=1}^{n}{{{\Phi }_{i}}\left( f'\left( {{A}_{i}} \right){{A}_{i}} \right)}-s\sum\limits_{i=1}^{n}{{{\Phi }_{i}}\left( f'\left( {{A}_{i}} \right) \right),}  
	\end{aligned}\] 
which implies
	\[\begin{aligned}
	f'\left( s \right)\left\langle \sum\limits_{i=1}^{n}{{{\Phi }_{i}}\left( {{A}_{i}} \right)}x,x \right\rangle -f'\left( s \right)s&\le \left\langle \sum\limits_{i=1}^{n}{{{\Phi }_{i}}\left( f\left( {{A}_{i}} \right) \right)}x,x \right\rangle -f\left( s \right) \\ 
	& \le \left\langle \sum\limits_{i=1}^{n}{{{\Phi }_{i}}\left( f'\left( {{A}_{i}} \right){{A}_{i}} \right)}x,x \right\rangle -s\left\langle \sum\limits_{i=1}^{n}{{{\Phi }_{i}}\left( f'\left( {{A}_{i}} \right) \right)}x,x \right\rangle   
	\end{aligned}\] 
	for any unit vector $x\in \mathcal{K}$.\\
	Applying again functional calculus for the operator $\sum\nolimits_{i=1}^{n}{{{\Phi }_{i}}\left( {{B}_{i}} \right)}$, we infer  
	\[\begin{aligned}
	& \left\langle \sum\limits_{i=1}^{n}{{{\Phi }_{i}}\left( {{A}_{i}} \right)}x,x \right\rangle f'\left( \sum\limits_{i=1}^{n}{{{\Phi }_{i}}\left( {{B}_{i}} \right)} \right)-f'\left( \sum\limits_{i=1}^{n}{{{\Phi }_{i}}\left( {{B}_{i}} \right)} \right)\sum\limits_{i=1}^{n}{{{\Phi }_{i}}\left( {{B}_{i}} \right)} \\ 
	& \le \left\langle \sum\limits_{i=1}^{n}{{{\Phi }_{i}}\left( f\left( {{A}_{i}} \right) \right)}x,x \right\rangle {{\mathbf{1}}_{\mathcal{K}}}-f\left( \sum\limits_{i=1}^{n}{{{\Phi }_{i}}\left( {{B}_{i}} \right)} \right) \\ 
	& \le \left\langle \sum\limits_{i=1}^{n}{{{\Phi }_{i}}\left( f'\left( {{A}_{i}} \right){{A}_{i}} \right)}x,x \right\rangle {{\mathbf{1}}_{\mathcal{K}}}-\left\langle \sum\limits_{i=1}^{n}{{{\Phi }_{i}}\left( f'\left( {{A}_{i}} \right) \right)}x,x \right\rangle \sum\limits_{i=1}^{n}{{{\Phi }_{i}}\left( {{B}_{i}} \right)}.  
	\end{aligned}\] 
	Thus, for any unit vector $x\in \mathcal{K}$,
	\begin{align}
	& \left\langle \sum\limits_{i=1}^{n}{{{\Phi }_{i}}\left( {{A}_{i}} \right)}x,x \right\rangle \left\langle f'\left( \sum\limits_{i=1}^{n}{{{\Phi }_{i}}\left( {{B}_{i}} \right)} \right)x,x \right\rangle -\left\langle f'\left( \sum\limits_{i=1}^{n}{{{\Phi }_{i}}\left( {{B}_{i}} \right)} \right)\sum\limits_{i=1}^{n}{{{\Phi }_{i}}\left( {{B}_{i}} \right)}x,x \right\rangle  \nonumber\\ 
	& \le \left\langle \sum\limits_{i=1}^{n}{{{\Phi }_{i}}\left( f\left( {{A}_{i}} \right) \right)}x,x \right\rangle -\left\langle f\left( \sum\limits_{i=1}^{n}{{{\Phi }_{i}}\left( {{B}_{i}} \right)} \right)x,x \right\rangle  \nonumber\\ 
	& \le \left\langle \sum\limits_{i=1}^{n}{{{\Phi }_{i}}\left( f'\left( {{A}_{i}} \right){{A}_{i}} \right)}x,x \right\rangle -\left\langle \sum\limits_{i=1}^{n}{{{\Phi }_{i}}\left( f'\left( {{A}_{i}} \right) \right)}x,x \right\rangle \left\langle \sum\limits_{i=1}^{n}{{{\Phi }_{i}}\left( {{B}_{i}} \right)}x,x \right\rangle   \nonumber
	\end{align}
	as required.
\end{proof}
\subsection{Function order without prior order assumptions}

In this part of the paper, we employ Theorem \ref{05} to  obtain several order results for any set of certain operators. We begin with the following generalized version of  \eqref{choi_ineq_intro} and \eqref{7} for different $n-$tuples of operators.

\begin{corollary}\label{8}
	Let all the assumptions of Theorem \ref{05} be satisfied. Then 
	\begin{equation}\label{07}
	f\left( \sum\limits_{i=1}^{n}{{{\Phi }_{i}}\left( {{B}_{i}} \right)} \right)\le \sum\limits_{i=1}^{n}{{{\Phi }_{i}}\left( f\left( {{A}_{i}} \right) \right)}+\delta\mathbf{1}_\mathcal{K},
	\end{equation}
	where
	\begin{small}
		\[\delta =\underset{\left\| x \right\|=1}{\mathop{\underset{x\in \mathcal{K}}{\mathop{\sup }}\,}}\,\left\{ \left\langle f'\left( \sum\limits_{i=1}^{n}{{{\Phi }_{i}}\left( {{B}_{i}} \right)} \right)\sum\limits_{i=1}^{n}{{{\Phi }_{i}}\left( {{B}_{i}} \right)}x,x \right\rangle -\left\langle \sum\limits_{i=1}^{n}{{{\Phi }_{i}}\left( {{A}_{i}} \right)}x,x \right\rangle \left\langle f'\left( \sum\limits_{i=1}^{n}{{{\Phi }_{i}}\left( {{B}_{i}} \right)} \right)x,x \right\rangle  \right\}.\]
	\end{small}
\end{corollary}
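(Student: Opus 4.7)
The plan is to read Corollary \ref{8} off the left half of the double inequality in Theorem \ref{05}: I isolate the term $\langle f(\sum_i \Phi_i(B_i))x, x\rangle$, bound the resulting correction term pointwise in $x$ by $\delta$, and then pass from a pointwise numerical estimate to an operator inequality.

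Concretely, I would start from the lower bound in \eqref{02}, which, once rearranged, reads
\begin{equation*}
\left\langle f\!\left(\sum_{i=1}^n \Phi_i(B_i)\right)x, x\right\rangle - \left\langle \sum_{i=1}^n \Phi_i(f(A_i)) x, x\right\rangle \le R(x),
\end{equation*}
where
\begin{equation*}
R(x) := \left\langle f'\!\left(\sum_{i=1}^n \Phi_i(B_i)\right) \sum_{i=1}^n \Phi_i(B_i) x, x\right\rangle - \left\langle \sum_{i=1}^n \Phi_i(A_i) x, x\right\rangle \left\langle f'\!\left(\sum_{i=1}^n \Phi_i(B_i)\right) x, x\right\rangle.
\end{equation*}
By the very definition of $\delta$ in the statement, one has $R(x) \le \delta$ for every unit vector $x \in \mathcal{K}$.

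Combining the two previous estimates, for any unit vector $x \in \mathcal{K}$,
\begin{equation*}
\left\langle \left[ f\!\left(\sum_{i=1}^n \Phi_i(B_i)\right) - \sum_{i=1}^n \Phi_i(f(A_i)) \right] x, x\right\rangle \le \delta.
\end{equation*}
Since the bracketed operator is self-adjoint, this numerical inequality valid on every unit vector is equivalent to the operator inequality \eqref{07}, finishing the proof.

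The result is essentially a formal consequence of Theorem \ref{05}, so I do not anticipate a genuine obstacle. The only point of care is to pick the correct half of the double inequality: one must use the lower bound in \eqref{02} (so that the correction term involves $f'$ evaluated at $\sum_i \Phi_i(B_i)$) rather than the upper bound in \eqref{3}, since it is precisely the former whose supremum over unit vectors is the prescribed $\delta$. No finiteness of $\delta$ is needed for the statement itself, as \eqref{07} is vacuous when $\delta = +\infty$.
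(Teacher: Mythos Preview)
Your proposal is correct and follows essentially the same argument as the paper: rearrange the lower bound \eqref{02} of Theorem \ref{05}, bound the resulting correction term by its supremum $\delta$, and pass from the pointwise numerical inequality on unit vectors to the operator inequality.
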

\begin{proof}
	From \eqref{02}, it follows that for any unit vector $x\in\mathcal{K}$,
	\[\begin{aligned}
	& \left\langle f\left( \sum\limits_{i=1}^{n}{{{\Phi }_{i}}\left( {{B}_{i}} \right)} \right)x,x \right\rangle  \\ 
	& \le \left\langle \sum\limits_{i=1}^{n}{{{\Phi }_{i}}\left( f\left( {{A}_{i}} \right) \right)}x,x \right\rangle  \\ 
	&\quad +\left\langle f'\left( \sum\limits_{i=1}^{n}{{{\Phi }_{i}}\left( {{B}_{i}} \right)} \right)\sum\limits_{i=1}^{n}{{{\Phi }_{i}}\left( {{B}_{i}} \right)}x,x \right\rangle -\left\langle \sum\limits_{i=1}^{n}{{{\Phi }_{i}}\left( {{A}_{i}} \right)}x,x \right\rangle \left\langle f'\left( \sum\limits_{i=1}^{n}{{{\Phi }_{i}}\left( {{B}_{i}} \right)} \right)x,x \right\rangle  \\ 
	& \le \left\langle \sum\limits_{i=1}^{n}{{{\Phi }_{i}}\left( f\left( {{A}_{i}} \right) \right)}x,x \right\rangle +\delta .  
	\end{aligned}\]
	This implies the desired inequality.
\end{proof}

If we let $A_i=B_i$ in Corollary \ref{8}, we obtain the following convex version of \eqref{7}.
\begin{corollary}\label{6}
	Let ${{\Phi }_{1}},\ldots ,{{\Phi }_{n}}:\mathcal{B}\left( \mathcal{H} \right)\to \mathcal{B}\left( \mathcal{K} \right)$ be positive linear mappings with $\sum\nolimits_{i=1}^{n}{{{\Phi }_{i}}\left( {{\mathbf{1}}_{\mathcal{H}}} \right)}={{\mathbf{1}}_{\mathcal{K}}}$ and let ${{A}_{1}},\ldots ,{{A}_{n}}\in \mathcal{B}\left( \mathcal{H} \right)$ be self adjoint operators with  spectra contained in the interval $J$. If $f\in {{C}^{1}_{\text{co}}}\left( J \right)$, then
	\[f\left( \sum\limits_{i=1}^{n}{{{\Phi }_{i}}\left( {{A}_{i}} \right)} \right)\le \sum\limits_{i=1}^{n}{{{\Phi }_{i}}\left( f\left( {{A}_{i}} \right) \right)}+\eta {{\mathbf{1}}_{\mathcal{K}}},\]
	where
	\begin{small}
		\[\eta =\underset{\left\| x \right\|=1}{\mathop{\underset{x\in \mathcal{K}}{\mathop{\sup }}\,}}\,\left\{ \left\langle f'\left( \sum\limits_{i=1}^{n}{{{\Phi }_{i}}\left( {{A}_{i}} \right)} \right)\sum\limits_{i=1}^{n}{{{\Phi }_{i}}\left( {{A}_{i}} \right)}x,x \right\rangle -\left\langle \sum\limits_{i=1}^{n}{{{\Phi }_{i}}\left( {{A}_{i}} \right)}x,x \right\rangle \left\langle f'\left( \sum\limits_{i=1}^{n}{{{\Phi }_{i}}\left( {{A}_{i}} \right)} \right)x,x \right\rangle  \right\}.\]	
	\end{small}
\end{corollary}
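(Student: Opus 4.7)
The plan is to obtain this statement as an immediate specialization of Corollary \ref{8}. All hypotheses required by Corollary \ref{8} match those of the present statement, except that Corollary \ref{8} involves two $n$-tuples $\{A_i\}$ and $\{B_i\}$ of self adjoint operators, whereas here only one family is given. I would therefore simply choose $B_i = A_i$ for every $i = 1,\ldots,n$, which is legitimate since $A_i$ is self adjoint with spectrum in $J$.

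With this substitution, the sum $\sum_{i=1}^n \Phi_i(B_i)$ appearing in Corollary \ref{8} collapses to $\sum_{i=1}^n \Phi_i(A_i)$, so inequality \eqref{07} becomes
\[f\left(\sum_{i=1}^n \Phi_i(A_i)\right) \leq \sum_{i=1}^n \Phi_i(f(A_i)) + \delta\, \mathbf{1}_\mathcal{K}.\]
The remaining task is purely bookkeeping: check that the constant $\delta$ from Corollary \ref{8} reduces to the quantity $\eta$ defined here. Every occurrence of $\sum_{i=1}^n \Phi_i(B_i)$ inside the supremum defining $\delta$ is replaced by $\sum_{i=1}^n \Phi_i(A_i)$, and after this replacement the expression matches $\eta$ term by term.

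Since the hypotheses coincide and the two constants agree after the substitution, the desired inequality follows with no additional estimate. There is no genuine obstacle at this stage: the gradient inequality \eqref{grad_ineq_intro} together with the functional-calculus step in the proof of Theorem \ref{05} already did the real work, and the present corollary is precisely the diagonal case $A_i = B_i$ of Corollary \ref{8}.
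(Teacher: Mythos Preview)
Your proposal is correct and matches the paper's own argument exactly: the paper derives this corollary simply by setting $B_i=A_i$ in Corollary~\ref{8}, and you carry out precisely that substitution together with the routine check that $\delta$ specializes to $\eta$.
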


Further simpler, if we let $n=1$ in Corollary \ref{8}, we reach the following order relation between $f(A)$ and $f(B)$ in general.
\begin{corollary}\label{cor_order_A,B}
	Let  $A,B\in \mathcal{B}\left( \mathcal{H} \right)$ be self adjoint operators with  spectra contained in the interval $J$ and let $f\in {{C}^{1}_{\text{co}}}\left( J \right)$. Then
	$$f\left( B \right)\le f\left( A \right)+\gamma {{\mathbf{1}}_{\mathcal{H}}},$$
	where
	$$\gamma =\underset{\left\| x \right\|=1}{\mathop{\underset{x\in \mathcal{H}}{\mathop{\sup }}\,}}\,\left\{ \left\langle f'\left( B \right)Bx,x \right\rangle -\left\langle Ax,x \right\rangle \left\langle f'\left( B \right)x,x \right\rangle  \right\}.$$
\end{corollary}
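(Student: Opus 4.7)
The plan is to obtain this statement as a direct specialization of Corollary \ref{8}, which has already been established via Theorem \ref{05}. Concretely, I would take $n=1$, set $\mathcal{K}=\mathcal{H}$, and let $\Phi_1$ be the identity map on $\mathcal{B}(\mathcal{H})$; this choice is clearly a positive linear map and satisfies the unital condition $\Phi_1(\mathbf{1}_\mathcal{H}) = \mathbf{1}_\mathcal{H}$. With $A_1 = A$ and $B_1 = B$, the hypotheses of Corollary \ref{8} (self-adjointness, spectra in $J$, and $f \in C^1_{\text{co}}(J)$) are verified from the hypotheses of the present corollary.

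Next, I would substitute directly into the conclusion of Corollary \ref{8}. The sum $\sum_{i=1}^{n} \Phi_i(A_i)$ collapses to $A$, the sum $\sum_{i=1}^{n} \Phi_i(B_i)$ collapses to $B$, and $\sum_{i=1}^{n} \Phi_i(f(A_i))$ collapses to $f(A)$. Therefore the inequality
\[
f\Bigl(\sum_{i=1}^{n}\Phi_i(B_i)\Bigr) \leq \sum_{i=1}^{n}\Phi_i(f(A_i)) + \delta \mathbf{1}_\mathcal{K}
\]
becomes $f(B) \leq f(A) + \delta \mathbf{1}_\mathcal{H}$, while the quantity $\delta$ reduces to
\[
\delta = \sup_{\substack{x \in \mathcal{H} \\ \|x\|=1}} \bigl\{ \langle f'(B)Bx, x\rangle - \langle Ax, x\rangle \langle f'(B)x, x\rangle \bigr\},
\]
which is precisely the constant $\gamma$ appearing in the statement. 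So the identification $\delta = \gamma$ is immediate.

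There is essentially no obstacle here; the only thing worth double-checking is that the identity map qualifies as a positive linear map in the framework of Theorem \ref{05}, which is obvious, and that the $\sup$ indexing in Corollary \ref{8} (over unit vectors in $\mathcal{K}$) matches the indexing (over unit vectors in $\mathcal{H}$) claimed for $\gamma$, which holds since $\mathcal{K}=\mathcal{H}$ in this specialization. Thus the proof will consist of one short paragraph invoking Corollary \ref{8} with these choices and reading off the conclusion.
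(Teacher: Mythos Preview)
Your proposal is correct and matches the paper's own approach exactly: the paper simply states that the corollary follows by letting $n=1$ in Corollary~\ref{8}, which is precisely the specialization (with $\Phi_1$ the identity on $\mathcal{B}(\mathcal{H})$) that you spell out in detail.
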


Next, we show a reversed version of \eqref{07}.
\begin{corollary}\label{800}
Let all assumptions of Theorem \ref{8} hold. Then
\begin{equation}\label{400}
\sum\limits_{i=1}^{n}{{{\Phi }_{i}}\left( f\left( {{A}_{i}} \right) \right)}\le f\left( \sum\limits_{i=1}^{n}{{{\Phi }_{i}}\left( {{B}_{i}} \right)} \right)+\theta {{\mathbf{1}}_{\mathcal{K}}},
\end{equation}
where
\[\theta =\underset{\left\| x \right\|=1}{\mathop{\underset{x\in \mathcal{K}}{\mathop{\sup }}\,}}\,\left\{ \left\langle \sum\limits_{i=1}^{n}{{{\Phi }_{i}}\left( f'\left( {{A}_{i}} \right){{A}_{i}} \right)}x,x \right\rangle -\left\langle \sum\limits_{i=1}^{n}{{{\Phi }_{i}}\left( f'\left( {{A}_{i}} \right) \right)}x,x \right\rangle \left\langle \sum\limits_{i=1}^{n}{{{\Phi }_{i}}\left( {{B}_{i}} \right)}x,x \right\rangle  \right\}.\]
\end{corollary}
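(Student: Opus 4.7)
The plan is to read off the desired inequality directly from the upper bound already established in Theorem \ref{05}. Indeed, the chain of inequalities in Theorem \ref{05} contains the bound
\[
\left\langle \sum_{i=1}^{n}\Phi_i(f(A_i))x,x\right\rangle - \left\langle f\left(\sum_{i=1}^{n}\Phi_i(B_i)\right)x,x\right\rangle \le \left\langle \sum_{i=1}^{n}\Phi_i(f'(A_i)A_i)x,x\right\rangle - \left\langle \sum_{i=1}^{n}\Phi_i(f'(A_i))x,x\right\rangle\left\langle \sum_{i=1}^{n}\Phi_i(B_i)x,x\right\rangle
\]
for every unit vector $x\in\mathcal{K}$, which is exactly the quantity appearing inside the supremum that defines $\theta$.

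The first step is therefore to rewrite this as
\[
\left\langle \sum_{i=1}^{n}\Phi_i(f(A_i))x,x\right\rangle \le \left\langle f\left(\sum_{i=1}^{n}\Phi_i(B_i)\right)x,x\right\rangle + \left(\text{RHS expression in } x\right),
\]
and the second step is simply to bound the last summand from above by $\theta$, using the definition of $\theta$ as a supremum over all unit vectors. This yields the scalar inequality
\[
\left\langle \sum_{i=1}^{n}\Phi_i(f(A_i))x,x\right\rangle \le \left\langle \left(f\left(\sum_{i=1}^{n}\Phi_i(B_i)\right)+\theta\mathbf{1}_{\mathcal{K}}\right)x,x\right\rangle
\]
for every unit vector $x\in\mathcal{K}$.

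The final step is to convert this scalar inequality into the operator inequality \eqref{400} by invoking the standard fact that for self adjoint operators $X,Y\in\mathcal{B}(\mathcal{K})$, $X\le Y$ holds iff $\langle Xx,x\rangle \le \langle Yx,x\rangle$ for all unit vectors $x$. Since both $\sum_{i=1}^{n}\Phi_i(f(A_i))$ and $f(\sum_{i=1}^{n}\Phi_i(B_i))+\theta\mathbf{1}_{\mathcal{K}}$ are self adjoint, \eqref{400} follows at once.

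There is no real obstacle here; the proof is essentially a one-line consequence of the right-hand side of Theorem \ref{05} combined with the \emph{supremum} definition of $\theta$. The only point deserving brief mention is that $\theta$ may well be infinite in general (as $\mathcal{K}$ may be infinite-dimensional and the operators $A_i,B_i$ need not have bounded spectra encoded in a compact interval), but the stated inequality is vacuous in that case; whenever $\theta<\infty$ the argument is complete. One might also remark, by symmetry with Corollary \ref{8}, that taking $A_i=B_i$ and $n=1$ recovers simpler two-operator forms, but no extra work is needed to establish \eqref{400} itself.
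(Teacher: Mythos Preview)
Your proof is correct and follows essentially the same route as the paper: both read off inequality \eqref{3} from Theorem \ref{05}, bound the right-hand side by the supremum defining $\theta$, and pass from the scalar inequality over all unit vectors to the operator inequality. (Your caveat about $\theta$ possibly being infinite is unnecessary here, since all operators in $\mathcal{B}(\mathcal{H})$ are bounded and the $\Phi_i$ are positive linear maps summing to a unital family, so the expression inside the supremum is uniformly bounded.)
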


\begin{proof}

It follows from the relation \eqref{3},
\[\begin{aligned}
 \left\langle \sum\limits_{i=1}^{n}{{{\Phi }_{i}}\left( f\left( {{A}_{i}} \right) \right)}x,x \right\rangle &\le \left\langle f\left( \sum\limits_{i=1}^{n}{{{\Phi }_{i}}\left( {{B}_{i}} \right)} \right)x,x \right\rangle  \\ 
&\quad +\left\langle \sum\limits_{i=1}^{n}{{{\Phi }_{i}}\left( f'\left( {{A}_{i}} \right){{A}_{i}} \right)}x,x \right\rangle -\left\langle \sum\limits_{i=1}^{n}{{{\Phi }_{i}}\left( f'\left( {{A}_{i}} \right) \right)}x,x \right\rangle \left\langle \sum\limits_{i=1}^{n}{{{\Phi }_{i}}\left( {{B}_{i}} \right)}x,x \right\rangle . \\ 
\end{aligned}\]
for any unit vector $x\in\mathcal{K}$. Thus,
\begin{equation}\label{4}
\sum\limits_{i=1}^{n}{{{\Phi }_{i}}\left( f\left( {{A}_{i}} \right) \right)}\le f\left( \sum\limits_{i=1}^{n}{{{\Phi }_{i}}\left( {{B}_{i}} \right)} \right)+\theta {{\mathbf{1}}_{\mathcal{K}}},
\end{equation}
where
\[\theta =\underset{\left\| x \right\|=1}{\mathop{\underset{x\in \mathcal{K}}{\mathop{\sup }}\,}}\,\left\{ \left\langle \sum\limits_{i=1}^{n}{{{\Phi }_{i}}\left( f'\left( {{A}_{i}} \right){{A}_{i}} \right)}x,x \right\rangle -\left\langle \sum\limits_{i=1}^{n}{{{\Phi }_{i}}\left( f'\left( {{A}_{i}} \right) \right)}x,x \right\rangle \left\langle \sum\limits_{i=1}^{n}{{{\Phi }_{i}}\left( {{B}_{i}} \right)}x,x \right\rangle  \right\}\]
yields the assertion.
\end{proof}

Of course, \eqref{4} implies
\[\sum\limits_{i=1}^{n}{{{\Phi }_{i}}\left( f\left( {{A}_{i}} \right) \right)}\le f\left( \sum\limits_{i=1}^{n}{{{\Phi }_{i}}\left( {{A}_{i}} \right)} \right)+\vartheta {{\mathbf{1}}_{\mathcal{K}}},\]
where
\[\vartheta =\underset{\left\| x \right\|=1}{\mathop{\underset{x\in \mathcal{K}}{\mathop{\sup }}\,}}\,\left\{ \left\langle \sum\limits_{i=1}^{n}{{{\Phi }_{i}}\left( f'\left( {{A}_{i}} \right){{A}_{i}} \right)}x,x \right\rangle -\left\langle \sum\limits_{i=1}^{n}{{{\Phi }_{i}}\left( f'\left( {{A}_{i}} \right) \right)}x,x \right\rangle \left\langle \sum\limits_{i=1}^{n}{{{\Phi }_{i}}\left( {{A}_{i}} \right)}x,x \right\rangle  \right\},\]
which is a reverse of \eqref{7}.

\subsection{Function order with prior order assumptions}
To easily understand the discussion of this part of the paper, we begin with the following remark.

\begin{remark}
It is a valid question to ask about the quantities $\delta$ in Corollary \ref{8}, $\eta$ in Corollary \ref{6} and $\gamma$ in Corollary \ref{cor_order_A,B}. In particular, are these quantities positive or negative?\\
The purpose of this remark is to discuss and answer this question. Applying functional calculus for $s=A$ in \eqref{grad_ineq_intro}, we obtain
$$f(A)-f(t)\mathbf{1}_\mathcal{H}\leq Af'(A)-tf'(A),$$ which implies
$$\left<f(A)x,x\right>-f(t)\leq \left<Af'(A)x,x\right>-t\left<f'(A)x,x\right>,~ x\in\mathcal{H}, \|x\|=1.$$ Now replacing $t$ by $\left<Ax,x\right>$ and noting \eqref{19} (for $n=1$ and $\Phi$ being the identity mapping), we obtain
$$ \left<Af'(A)x,x\right>-\left<Ax,x\right>\left<f'(A)x,x\right>\geq \left<f(A)x,x\right>-f\left(\left<Ax,x\right>\right)\geq 0.$$
Therefore, we have shown that if $f\in C^{1}_{\text{co}}(J)$ and $A$ is a self adjoint operator with spectrum in $J$, then
$$ \left<Af'(A)x,x\right>-\left<Ax,x\right>\left<f'(A)x,x\right>\geq 0$$ for any unit vector $x\in\mathcal{H}.$\\
Consequently, replacing $A$ by $\sum_{i=1}^{n}\Phi_i(A_i)$, we infer that $\eta\geq 0$ in Corollary \ref{6}.

Furthermore, if $A\leq B$ and $f'\geq 0$ (i.e., $f$ is increasing), then $\left<Ax,x\right>\leq \left<Bx,x\right>$ and since $f'\geq 0,$ we deduce
\begin{align}\label{needed_1}
\left\langle f'\left( B \right)Bx,x \right\rangle -\left\langle Ax,x \right\rangle \left\langle f'\left( B \right)x,x \right\rangle&\geq \left\langle f'\left( B \right)Bx,x \right\rangle -\left\langle Bx,x \right\rangle \left\langle f'\left( B \right)x,x \right\rangle\geq 0,
\end{align}
which shows that $\gamma\geq 0$ in Corollary \ref{cor_order_A,B} when $A\leq B$ and $f$ is increasing.\\
On the other hand, if $A\geq B$ and $f'\leq 0$ (i.e., $f$ is decreasing), then
\begin{align*}
\left\langle f'\left( B \right)Bx,x \right\rangle -\left\langle Ax,x \right\rangle \left\langle f'\left( B \right)x,x \right\rangle&\geq \left\langle f'\left( B \right)Bx,x \right\rangle -\left\langle Bx,x \right\rangle \left\langle f'\left( B \right)x,x \right\rangle\\
&\geq 0,
\end{align*}
which again shows that $\gamma\geq 0$ in Corollary \ref{cor_order_A,B} when $A\geq B$ and $f$ is decreasing.\\
Finally, in \eqref{needed_1}, if we let $B=\sum_{i=1}^{n}\Phi_i(B_i)$ and $A=\sum_{i=1}^{n}\Phi_i(A_i),$ and assume that $\sum_{i=1}^{n}\Phi_i(A_i)\leq \sum_{i=1}^{n}\Phi_i(B_i)$, we infer that $\delta\geq 0$ in Corollary \ref{8}, when $f$ is increasing.

Of course, the question is still valid to ask if it is possible to have negative values for those quantities. The answer is yes! For example, if $f(t)=t$ and if we have $A\geq B$, then
\begin{align*}
\left\langle f'\left( B \right)Bx,x \right\rangle -\left\langle Ax,x \right\rangle \left\langle f'\left( B \right)x,x \right\rangle&=\left<Bx,x\right>-\left<Ax,x\right>\leq 0;
\end{align*}
showing that $\gamma\leq 0$ in Corollary \ref{cor_order_A,B} for some cases. The other quantities can be treated similarly.
\end{remark}

We conclude this article with the more elaborated versions of Corollary \ref{cor_order_A,B}, which read as follows. In both results $\gamma$ is still as in Corollary \ref{cor_order_A,B}.
\begin{corollary}\label{cor_order_A,B_2}
	Let  $A,B\in \mathcal{B}\left( \mathcal{H} \right)$ be self adjoint operators with  spectra contained in the interval $J$ and let $f\in {{C}^{1}_{\text{co}}}\left( J \right)$. If $A\leq B$ and $f$ is increasing, then a non-negative number $\gamma$ exists such that
	$$f\left( B \right)\le f\left( A \right)+\gamma {{\mathbf{1}}_{\mathcal{H}}}.$$
\end{corollary}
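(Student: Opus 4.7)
The plan is to deduce this corollary directly from Corollary \ref{cor_order_A,B} together with the analysis carried out in the preceding Remark. The inequality $f(B)\le f(A)+\gamma\mathbf{1}_{\mathcal{H}}$ is already the exact content of Corollary \ref{cor_order_A,B} (which itself was a one-operator specialization of Corollary \ref{8}, i.e. the $n=1$ case of Theorem \ref{05}). So the whole substance of the proof is to verify that the specific scalar
$$\gamma=\sup_{\|x\|=1,\,x\in\mathcal{H}}\bigl\{\langle f'(B)Bx,x\rangle-\langle Ax,x\rangle\langle f'(B)x,x\rangle\bigr\}$$
is non-negative under the extra hypotheses $A\le B$ and $f$ increasing.

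First I would fix a unit vector $x\in\mathcal{H}$ and bound the expression inside the supremum from below. Since $f$ is increasing on $J$, we have $f'\ge 0$ on $J$, so functional calculus gives $f'(B)\ge 0$ and hence $\langle f'(B)x,x\rangle\ge 0$. Combining this with the scalar inequality $\langle Ax,x\rangle\le\langle Bx,x\rangle$ coming from $A\le B$, I obtain
$$\langle Ax,x\rangle\langle f'(B)x,x\rangle\le\langle Bx,x\rangle\langle f'(B)x,x\rangle,$$
and therefore
$$\langle f'(B)Bx,x\rangle-\langle Ax,x\rangle\langle f'(B)x,x\rangle\ge\langle f'(B)Bx,x\rangle-\langle Bx,x\rangle\langle f'(B)x,x\rangle.$$

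The last step is to recognize that the right-hand side is non-negative. This is exactly the scalar Jensen-type inequality established in the Remark: for any $f\in C^{1}_{\text{co}}(J)$ and any self-adjoint $B$ with spectrum in $J$,
$$\langle f'(B)Bx,x\rangle-\langle Bx,x\rangle\langle f'(B)x,x\rangle\ge\langle f(B)x,x\rangle-f(\langle Bx,x\rangle)\ge 0,$$
where the final inequality is Jensen's operator inequality \eqref{19} applied to the convex $f$. Taking the supremum over unit vectors yields $\gamma\ge 0$, finishing the proof.

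Since every ingredient has already been established, there is no real obstacle; the main point worth spelling out carefully is the monotonicity chain above, which requires both $f'\ge 0$ and $A\le B$ to line up in the correct direction. This is precisely the reason the hypothesis ``$f$ increasing'' is imposed here: without it, the substitution of $B$ for $A$ in the second argument could reverse the sign, and the construction in the Remark of an explicit example with $\gamma\le 0$ (taking $f(t)=t$ and $A\ge B$) shows that the monotonicity hypothesis cannot simply be dropped.
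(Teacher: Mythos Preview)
Your proposal is correct and follows precisely the paper's own route: the inequality itself comes from Corollary \ref{cor_order_A,B}, and the non-negativity of $\gamma$ is exactly the chain \eqref{needed_1} established in the preceding Remark, combining $f'\ge 0$, $A\le B$, and the scalar Jensen inequality \eqref{19}. There is nothing to add.
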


\begin{corollary}\label{cor_order_A,B_3}
	Let  $A,B\in \mathcal{B}\left( \mathcal{H} \right)$ be self adjoint operators with  spectra contained in the interval $J$ and let $f\in {{C}^{1}_{\text{co}}}\left( J \right)$. If $B\leq A$ and $f$ is decreasing, then a non-negative number $\gamma$ exists such that
	$$f\left( B \right)\le f\left( A \right)+\gamma {{\mathbf{1}}_{\mathcal{H}}}.$$
	
\end{corollary}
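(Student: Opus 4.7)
The plan is to invoke Corollary \ref{cor_order_A,B} directly, which gives the inequality $f(B)\le f(A)+\gamma \mathbf{1}_{\mathcal{H}}$ with the quantity $\gamma=\sup_{\|x\|=1}\{\langle f'(B)Bx,x\rangle-\langle Ax,x\rangle\langle f'(B)x,x\rangle\}$. So the only real content beyond Corollary \ref{cor_order_A,B} is to verify that this specific $\gamma$ is non-negative under the new hypotheses $B\le A$ and $f$ decreasing. The argument is a direct transplant of the reasoning already presented in the Remark for the case $A\le B$ with $f$ increasing.

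First I would record the Jensen-type lower bound for a single operator. Applying the gradient inequality \eqref{grad_ineq_intro} at $s=B$ via functional calculus, then taking $\langle\cdot x,x\rangle$ and replacing the scalar $t$ by $\langle Bx,x\rangle$ (which lies in $J$ by the spectral theorem and the unit vector $x$), I obtain
\[
\langle f'(B)Bx,x\rangle-\langle Bx,x\rangle\langle f'(B)x,x\rangle\;\ge\;\langle f(B)x,x\rangle-f\bigl(\langle Bx,x\rangle\bigr)\;\ge\;0,
\]
where the last inequality is the scalar Jensen inequality \eqref{19} for $n=1$ with $\Phi$ the identity. This is exactly the general fact isolated in the Remark.

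Next I would compare the two expressions $\langle f'(B)Bx,x\rangle-\langle Ax,x\rangle\langle f'(B)x,x\rangle$ and $\langle f'(B)Bx,x\rangle-\langle Bx,x\rangle\langle f'(B)x,x\rangle$. Their difference equals $\langle f'(B)x,x\rangle\bigl(\langle Bx,x\rangle-\langle Ax,x\rangle\bigr)$. By hypothesis $B\le A$, so $\langle Bx,x\rangle-\langle Ax,x\rangle\le 0$; and since $f$ is decreasing we have $f'\le 0$ on $J$, hence $\langle f'(B)x,x\rangle\le 0$ by functional calculus. The product of two non-positive scalars is non-negative, so chaining with the previous step yields
\[
\langle f'(B)Bx,x\rangle-\langle Ax,x\rangle\langle f'(B)x,x\rangle\;\ge\;\langle f'(B)Bx,x\rangle-\langle Bx,x\rangle\langle f'(B)x,x\rangle\;\ge\;0
\]
for every unit vector $x\in\mathcal{H}$. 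Taking the supremum over such $x$ gives $\gamma\ge 0$, completing the proof.

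There is essentially no hard step: everything is available from Corollary \ref{cor_order_A,B} and the Remark. The only item requiring attention is the \emph{sign bookkeeping} when $f$ is decreasing, since the ordering of operators reverses under multiplication by the non-positive $\langle f'(B)x,x\rangle$; getting this sign right is precisely what converts the increasing/$A\le B$ case of the Remark into the decreasing/$B\le A$ case needed here.
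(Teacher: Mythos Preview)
Your proposal is correct and follows essentially the same approach as the paper: the paper also deduces the inequality from Corollary~\ref{cor_order_A,B} and establishes $\gamma\ge 0$ in the Remark via precisely the chain $\langle f'(B)Bx,x\rangle-\langle Ax,x\rangle\langle f'(B)x,x\rangle\ge\langle f'(B)Bx,x\rangle-\langle Bx,x\rangle\langle f'(B)x,x\rangle\ge 0$, using $B\le A$ and $f'\le 0$ for the first inequality.
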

Therefore, Corollaries \ref{cor_order_A,B_2} and \ref{cor_order_A,B_3} present order reversing  and order preserving  results, respectively.\\
It should be noticed that, for example, when $B\leq A$, we still have the order $f\left( B \right)\le f\left( A \right)+\gamma {{\mathbf{1}}_{\mathcal{H}}}$ for the increasing function $f$, which presents an order preserving result for the increasing $f$. However, in this case, we do not have any information about whether $\gamma\geq 0$ or not.

Such order preserving or reversing results then can be found similarly for the quantities appearing in Corollary \ref{8} and \ref{800}.

We emphasize that a major difference between the above results and those in Theorems \ref{th1.2} and \ref{c} is the fact that our results do not assume $\sigma(A),\sigma(B)\subset [m,M]$ for some scalars $m,M.$ Although the literature is rich in studying such inequalities, we are not aware of any results that treat arbitrary self adjoint operators without the restriction $\sigma(A),\sigma(B)\subset [m,M]$.

\section*{Acknowledgments}
The authors thank an anonymous referee for his/her insightful comments and suggestions.

\vskip 0.5 true cm

\vskip 0.3 true cm

{\tiny (G. Karamali) Faculty of Basic Sciences, Shahid Sattari Aeronautical University of Science and Technology, South Mehrabad, Tehran, Iran.
	
	\textit{E-mail address:} g\_karamali@iust.ac.ir}

\vskip 0.3 true cm

{\tiny (H.R. Moradi) Faculty of Basic Sciences, Shahid Sattari Aeronautical University of Science and Technology, South Mehrabad, Tehran, Iran.}

{\tiny \textit{E-mail address:} hrmoradi@mshdiau.ac.ir}

\vskip 0.3 true cm 	 

{\tiny (M. Sababheh) Department of Basic Sciences, Princess Sumaya University for Technology, Amman 11941,
	Jordan. 
	
	\textit{E-mail address:} sababheh@yahoo.com; sababheh@psut.edu.jo}
\end{document}